\documentclass{article}
\usepackage[utf8]{inputenc}
\usepackage{bbm}
\usepackage{multirow}
\usepackage{lineno}

\usepackage[overload]{empheq}
\usepackage{mathrsfs}
\usepackage{amsfonts}
\usepackage{amsmath}
\usepackage{amssymb}
\usepackage{dsfont}
\usepackage{diagbox}
\usepackage{graphicx}
\usepackage{mathabx}

\usepackage{tikz}
\usepackage{caption}
\usepackage{subcaption}
\usepackage{placeins, microtype}
\usepackage{latexsym,amsthm,xcolor,graphicx, longtable, tabu}
\usepackage{hyperref}
\usepackage{natbib}
\usepackage{mathtools}
\mathtoolsset{showonlyrefs}

\makeatletter
\def\theorem@space@setup{
  \theorem@preskip=0.4cm 
  \theorem@postskip=\theorem@preskip 
}
\makeatother

\newtheorem{theorem}{Theorem}[section]

\newtheorem{lemma}[theorem]{Lemma}

\newtheorem{example}[theorem]{Example}

\newtheorem{conjecture}[theorem]{Conjecture}

\newcommand\N{\mathbb{N}}
\newcommand\R{\mathbb{R}}

\newcommand\Q{\mathbb{Q}}
\newcommand\PP{\mathbb{P}}
\newcommand\EE{\mathbb{E}}

\newcommand{\lp}[1]{\left(#1\right)}
\newcommand{\lb}[1]{\left[#1\right]}

\newcommand{\norm}[1]{\left|\left|#1\right|\right| }

\long\def\metanote#1#2{{\color{#1}\
\ifmmode\hbox\fi{\sffamily\mdseries\upshape [#2]}\ }}

\title{Rigidity of infinite exchangeable sequences with Gaussian marginals}
\author{Maximillian Newman \thanks{
    Section of Genetic Medicine, University of Chicago.
    Email: \texttt{mnewman98@uchicago.edu}
}}
\date{\today}

\begin{document}

\maketitle

\begin{abstract}
    We study infinite exchangeable sequences with Gaussian one-dimensional marginals. We formulate the conjecture that joint Gaussianity of a single pair of coordinates forces the entire sequence to be a Gaussian process. Although this conjecture remains open, we prove that joint Gaussianity of the first four coordinates is sufficient. We also establish the corresponding two-point criterion under the additional assumption that the directing measure is almost surely infinitely divisible.
\end{abstract}

\section{Introduction}

    For a sequence of observations $X = \lp{X_i}_{i=1}^{\infty}$, exchangeability is a natural relaxation of the notion of being independent and identically distributed (iid) as it preserves the marginal structure of the sequence. This relaxation allows one to still recover many of the properties of iid sequences by preserving some of the geometric and combinatorial structures of iid sequences, e.g. the classical works of Aldous \cite{aldous81}, Hoover \cite{hoover79}, and Kallenberg \cite{kallenberg89}, as well as recent works \cite{barber24, bladtschaiderman23, ramdasmanole26}. The most comprehensive survey for such rigidity results, to the best of our knowledge, remains \cite{kallenberg05}. These results are often strengthened versions of the classical work of de Finetti \cite{definetti37}.

    Consider an infinite sequence $X = \lp{X_i}_{i=1}^{\infty}$ of random variables on a probability space $\lp{\Omega, \mathcal{F}, \PP}$ invariant under the action of finite permutations of $\N$, i.e. for which
    \begin{equation*}
        \lp{X_1, \ldots, X_n} \stackrel{d}{=} \lp{X_{\sigma(1)}, \ldots, X_{\sigma(n)}} 
    \end{equation*}
    for any permutation $\sigma$ of $\{1,\ldots, n\}$ and any natural number $n$. De Finetti \cite{definetti37} established that there exists a unique random probability measure $\eta$ such that the sequence $X$ is independent and identically distributed conditional on $\eta$. 
    
    It is well known \cite[p. 30]{aldous85} that when $X$ is a Gaussian process if and only if $\eta$ admits a representation
    \begin{equation}\label{E: representation_result}
        \eta= \delta_{M} * \mu,
    \end{equation}
    where $M$ is a Gaussian random variable and $\mu$ is the law of a Gaussian random variable. If $X$ is not a Gaussian process, even with Gaussian marginals, this need not hold in general, as shown in the following example. 

    \begin{example}\label{X: rademacher}
        Let $\epsilon_i$ be an iid sequence of random variables, independent of a centered Gaussian random variable $Z$, for which
        \begin{equation*}
            \PP\lp{\epsilon_1 = 1} = \PP\lp{\epsilon_1 = -1} = \frac{1}{2}.
        \end{equation*}
        Then $X_i:= \epsilon_i Z$ yields an infinite exchangeable sequence of random variables for which the representation of Equation~\eqref{E: representation_result} fails to hold. Indeed, by construction we have 
        \begin{equation}\label{E: rademacher_example_measure}
            \eta = \frac{1}{2}\delta_{-Z} + \frac{1}{2}\delta_Z.
        \end{equation}
    \end{example}
    
    Note that exchangeability fixes the covariance structure of $X$. We elect to normalize $X$ so that $\rm{Var}(X_1) = 1$ and $\rm{Cov}(X_1,X_2) = \rho$ for some $\rho \in [0,1]$. For any $r$-tuple $(X_1,\ldots, X_r)$ the covariance matrix $\Sigma^{(r)} = \lp{\Sigma_{i,j}^{(r)}}_{i,j=1}^r$ is defined by
    \begin{equation*}
        \Sigma_{i,j}^{(r)} = 
        \begin{cases}
            1 &, \quad \text{ if } i = j\\
            \rho &, \quad \text{ if } i \neq j
        \end{cases}.
    \end{equation*}
    
    As the covariance structure is fixed by exchangeability and $X$ need not be a Gaussian process if each $X_i$ is marginally Gaussian, is there a counterexample if we make the stronger assumption that $(X_1,X_2)$ is jointly Gaussian? We indeed conjecture there is no such counterexample.
    \begin{conjecture}\label{C: conjecture}
        Let $X = \lp{X_i}_{i=1}^\infty$ be an infinite exchangeable sequence of random variables. If $(X_1,X_2)$ is a centered multivariate Gaussian, then $X$ is a Gaussian process.
    \end{conjecture}
    We make partial progress in this paper towards this 2-point conjecture.
    
    If $X$ were a finite sequence, this would surely not be the case, as can be seen by the following example.
    \begin{example}\label{E: finite perturbation}
        For any finite $K$, there exists a collection of exchangeable, centered, marginally Gaussian random variables $X_1,X_2,\ldots, X_K$ such that each pair $(X_i,X_j)$ (for distinct $i$ and $j$) is a multivariate Gaussian with covariance matrix $I_{2 \times 2}$ but for which no triple is Gaussian.

        Indeed, let $p$ denote the probability density function of a standard Gaussian random variable. Define a density $f$ on $\R^K$ by
        \begin{equation*}
            f(x) = \left(1+\epsilon\sum_{i=1}^{K-2}\sum_{j=i+1}^{K-1}\sum_{k=j+1}^K \sin(x_ix_jx_k)\right)\prod_{i=1}^K p(x_i)
        \end{equation*}
        for $\epsilon < \binom{K}{3}^{-1}$. Integrating out all but two of the marginals then yields a pairwise Gaussian kernel as $\sin$ is odd and $p$ is even. Yet no triple is Gaussian as integrating out all the other variables keeps the $\sin$ perturbation. $f$ then is the density of such a desired tuple.
    \end{example}
    
    While we are unable to prove Conjecture~\ref{C: conjecture}, we are able to prove a rigidity result under a $4$-point assumption. The significance of this result is that a finite amount of Gaussian information forces the entire de Finetti directing measure to have the form of Equation~\eqref{E: representation_result}. In this sense, the theorem rules out infinite exchangeable analogues of the finite-dimensional perturbations in Example~\ref{E: finite perturbation} once four coordinates are assumed to be jointly Gaussian. 
    \begin{theorem}\label{T: main_1}
        Let $X = \lp{X_i}_{i=1}^\infty$ be an infinite exchangeable sequence of random variables for which $(X_1, X_2, X_3, X_4)$ is a centered multivariate Gaussian with covariance structure $\Sigma^{(4)}$. Then $X$ is a Gaussian process.
    \end{theorem}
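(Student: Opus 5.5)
The plan is to work entirely with the random characteristic function of the de Finetti directing measure. Let $\eta$ be the directing measure of $X$ and set $\varphi(t) := \int e^{itx}\,\eta(dx)$, a random continuous function with $\varphi(-t) = \overline{\varphi(t)}$. Conditionally on $\eta$ the $X_i$ are iid with law $\eta$. Hence for any $t_1,\ldots,t_4 \in \R$,
\begin{equation*}
    \EE\lb{\varphi(t_1)\varphi(t_2)\varphi(t_3)\varphi(t_4)} = \EE\lb{e^{i(t_1X_1+\cdots+t_4X_4)}} = \exp\lp{-\tfrac12\, \mathbf{t}^\top \Sigma^{(4)} \mathbf{t}},
\end{equation*}
and similarly for fewer arguments, using the Gaussianity of the sub-tuples.

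The key step is a second-moment computation showing that $|\varphi(t)|^2$ is deterministic. Taking $\mathbf{t} = (t,-t)$ in the two-point identity gives $\EE|\varphi(t)|^2 = e^{-(1-\rho)t^2}$. Taking $\mathbf{t} = (t,t,-t,-t)$ in the four-point identity gives
\begin{equation*}
    \mathbf{t}^\top\Sigma^{(4)}\mathbf{t} = 4t^2 + \rho\lp{(\textstyle\sum_i t_i)^2 - \sum_i t_i^2} = 4(1-\rho)t^2,
\end{equation*}
so $\EE|\varphi(t)|^4 = e^{-2(1-\rho)t^2} = \lp{\EE|\varphi(t)|^2}^2$. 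Thus $\mathrm{Var}(|\varphi(t)|^2) = 0$, and $|\varphi(t)|^2 = e^{-(1-\rho)t^2}$ almost surely for each fixed $t$. Intersecting over rational $t$ and using continuity of $\varphi$, this holds simultaneously for all $t$ almost surely. This is exactly where four coordinates are needed: two coordinates only control the mean of $|\varphi|^2$, not its fluctuations.

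Next I convert this into the structure of $\eta$. The identity says that the symmetrization $\eta * \tilde\eta$, where $\tilde\eta$ is the reflection of $\eta$, is almost surely $N(0, 2(1-\rho))$. By Cramér's decomposition theorem, each factor of a Gaussian law is Gaussian. Hence almost surely $\eta = \delta_M * N(0,1-\rho)$ with a random center $M$; if $\rho=1$, $\eta$ is a Dirac mass. The center $M = \int x\,\eta(dx)$ is measurable. The step I expect to require the most care is this invocation of Cramér's theorem pathwise, together with the measurability of $M$; both are standard.

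Finally, given $M$, the variables $X_i - M$ are iid $N(0,1-\rho)$ and independent of $M$. So $X_1 = M + Z_1$ with $Z_1 \sim N(0,1-\rho)$ independent of $M$, which gives $\EE e^{itM} = e^{-t^2/2}/e^{-(1-\rho)t^2/2} = e^{-\rho t^2/2}$, and $M \sim N(0,\rho)$. Therefore $X_i = M + Z_i$, with $M, Z_1, Z_2, \ldots$ independent Gaussians, is a Gaussian process. This is the representation \eqref{E: representation_result}.
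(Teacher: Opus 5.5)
Your proof is correct. It shares the paper's central mechanism: the four-point identity shows that the second moment of a transform-valued functional of $\eta$ equals the square of its first moment, so that functional is almost surely deterministic. The functional you choose, and the way you close the argument, are different.

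The paper works with the real moment generating function and the defect $q_{a,b}=g_ag_b-e^{2\rho ab}g_{a+b}$ for arbitrary $a,b$. Computing $\|q_{a,b}\|_{L^2(\lambda)}^2$ uses the two-, three- and four-point identities at $(a+b,a+b)$, $(a,b,a+b)$ and $(a,a,b,b)$. The resulting almost sure relation $K_\eta(a+b)=K_\eta(a)+K_\eta(b)+(1-\rho)ab$ is a Cauchy equation that pins down $K_\eta$ itself. Gaussianity of $\eta$ then follows from uniqueness of the moment generating function, with no decomposition theorem.

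Your functional $|\varphi(t)|^2$ is in effect the special case $b=-a$ of that defect. In that case the three-point identity degenerates, and only the zero-sum vectors $(t,-t)$ and $(t,t,-t,-t)$ are needed. The price is that you only determine the symmetrization $\eta*\tilde\eta$ and lose the location information. You must therefore import Cram\'er's decomposition theorem, a genuinely deep result resting on Hadamard factorization, to recover $\eta$.

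In exchange your route is shorter and avoids the exponential-moment bookkeeping of Lemma~\ref{L: mgf_finite_continuous}, since characteristic functions always exist and are continuous. It also proves a formally stronger statement. Gaussianity of $X_1-X_2$ and of $X_1+X_2-X_3-X_4$, with variances $2(1-\rho)$ and $4(1-\rho)$, already forces $\eta=\delta_M*\gamma_{1-\rho}$ almost surely. The marginal law of $X_1$ is then used only to identify the law of $M$, exactly as in the paper's final step.
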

    Equivalently, the directing measure $\eta$ must almost surely admit the representation in Equation~\eqref{E: representation_result}. The proof is given in Section~\ref{S: proof of 1}.

    We are also able to prove the two-point conjecture under an additional structural assumption on the directing measure, namely almost sure infinite divisibility. The L\'{e}vy-Khintchine representation guaranteed by infinite divisibility gives a non-negativity property for the fourth cumulant, which provides enough rigidity to recover the Gaussian de Finetti representation of Equation~\eqref{E: representation_result}.
    \begin{theorem}\label{T: main_2}
        Let $X = \lp{X_i}_{i=1}^\infty$ be an infinite exchangeable sequence of random variables with directing measure $\eta$ that is almost surely infinitely divisible. If $(X_1,X_2)$ is a centered multivariate Gaussian with covariance structure $\Sigma^{(2)}$, then $X$ is a Gaussian process. 
    \end{theorem}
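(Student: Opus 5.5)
The plan is to work with the conditional law of the difference $Y := X_1 - X_2$ and to use a fourth-moment identity. In that identity the Gaussianity of $(X_1,X_2)$ forces equality in two inequalities at once: non-negativity of the fourth cumulant, which comes from infinite divisibility, and Jensen's inequality for the conditional variance.

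\textbf{Step 1 (law of $Y$).} Conditionally on $\eta$, the variables $X_1, X_2$ are iid with law $\eta$. Hence conditionally on $\eta$, $Y$ has law $\eta * \tilde\eta$, where $\tilde\eta$ is the reflection of $\eta$. Unconditionally, $Y$ is centered Gaussian with variance $2(1-\rho)$, because $(X_1,X_2)$ is jointly Gaussian with covariance $\Sigma^{(2)}$. In particular $\EE[Y^4] < \infty$. Therefore $\int y^4 \, (\eta*\tilde\eta)(dy) < \infty$ almost surely, and so $\eta$ has a finite fourth moment almost surely.

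\textbf{Step 2 (cumulants from L\'evy--Khintchine).} Write the almost surely infinitely divisible $\eta$ with random L\'evy triplet $(b,\sigma^2,\nu)$. A finite fourth moment gives $\int_{|x|>1} x^4 \, \nu(dx) < \infty$, and then the cumulants are
\begin{equation*}
\kappa_2(\eta) = \sigma^2 + \int x^2 \, \nu(dx), \qquad \kappa_4(\eta) = \int x^4 \, \nu(dx) \ge 0 .
\end{equation*}
Since cumulants are additive under convolution and even cumulants are unchanged by reflection, the conditional cumulants of $Y$ are $K_2 := 2\kappa_2(\eta)$ and $K_4 := 2\kappa_4(\eta) \ge 0$. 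The conditional fourth moment of the centered variable $Y$ is then $K_4 + 3K_2^2$.

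\textbf{Step 3 (equality forcing).} Taking expectations gives two identities:
\begin{equation*}
\EE[K_2] = \EE[Y^2] = 2(1-\rho), \qquad
\EE[K_4] + 3\,\EE[K_2^2] = \EE[Y^4] = 3\bigl(2(1-\rho)\bigr)^2 .
\end{equation*}
Subtracting the first identity squared (times 3) from the second yields
\begin{equation*}
\EE[K_4] + 3\,\mathrm{Var}(K_2) = 0 .
\end{equation*}
Both terms are non-negative, so both vanish. Hence $\int x^4 \, \nu(dx) = 0$, i.e.\ $\nu = 0$ almost surely. It also follows that $K_2$ is almost surely constant, which gives $\sigma^2 = 1-\rho$ almost surely. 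Thus $\eta = \delta_M * \mathcal{N}(0, 1-\rho)$, where $M$ is the random drift.

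\textbf{Step 4 (law of $M$).} To identify $M$, compare characteristic functions:
\begin{equation*}
e^{-t^2/2} = \EE\bigl[e^{itX_1}\bigr] = \EE\bigl[e^{itM}\bigr]\, e^{-(1-\rho)t^2/2} .
\end{equation*}
Therefore $\EE[e^{itM}] = e^{-\rho t^2/2}$, so $M \sim \mathcal{N}(0,\rho)$; this includes the degenerate cases $\rho = 0$ and $\rho = 1$. The representation \eqref{E: representation_result} now holds, and the cited fact from \cite{aldous85} shows that $X$ is a Gaussian process.

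\textbf{Main obstacle.} The only delicate point is the integrability bookkeeping in Step 2. One must justify that the cumulant formulas via the L\'evy measure are valid under an almost sure finite fourth moment. One must also check that the randomness of the triplet poses no measurability issue, i.e.\ that $(b,\sigma^2,\nu)$ can be chosen as measurable functions of $\eta$. I would handle this by noting that the triplet is uniquely determined by, and measurable in, the characteristic function of $\eta$. Once that is settled, the argument above is purely algebraic.
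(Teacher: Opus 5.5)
Your proposal is correct and follows essentially the same route as the paper: it also uses $\kappa_4(\eta)=\int x^4\,d\nu\ge 0$ from L\'evy--Khintchine and the conditional identity $\EE[(X_1-X_2)^4\mid\eta]=12V^2+2\kappa_4(\eta)$. It then forces $\kappa_4(\eta)=0$ and a constant conditional variance via Jensen (your $\mathrm{Var}(K_2)=0$), and identifies the law of $M$ through characteristic functions. Your explicit use of $\EE[K_2]=\EE[Y^2]=2(1-\rho)$ spells out a step the paper's Jensen argument leaves implicit. The measurability worry is moot, since only $\kappa_2(\eta)$ and $\kappa_4(\eta)$ (polynomials in the moments of $\eta$) enter the averaging, and L\'evy--Khintchine is applied realization by realization.
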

    Theorem~\ref{T: main_2} is proven in Section~\ref{S: proof of 2}. The pairwise Gaussian assumption in Theorem~\ref{T: main_2} cannot be replaced by one-dimensional Gaussianity alone, even when the directing measure is almost surely infinitely divisible. The following example shows that randomizing the conditional variance of an otherwise Gaussian directing measure preserves the Gaussian marginal but destroys joint Gaussianity.
    
    \begin{example}
        Let $V$ be a non-constant random variable taking values in $(0,1)$. Conditional on $V$, we let $M$, conditional on $V$, be a centered normal random variable with variance $1-V$. We define the almost surely infinitely divisible directing measure
        \begin{equation*}
            \eta = \delta_M *\gamma_{V}.
        \end{equation*}
        
        Let $X_1, X_2, \ldots$ be independent and identically distributed with law $\eta$. Then each $X_i$ is marginally a standard Gaussian, and yet $(X_1,X_2)$ is not jointly Gaussian unless $V$ is deterministic. Indeed, if it were we would have that $X_1-X_2$ is Gaussian. In particular, we would have that its fourth moment satisfies
        \begin{equation*}
            \EE\lb{(X_1-X_2)^4} = 12\EE\lb{V^2} = 3(2 \EE\lb{V})^2,
        \end{equation*}
        which fails because $V$ is non-constant.
    \end{example}

    The remaining difficulty in Conjecture~\ref{C: conjecture} is to upgrade information about averaged two-point marginals into an almost sure structural statement about the directing measure. The two-point Gaussian assumption imposes identities only after averaging over the random measure $\eta$ that constrain $\eta$, but it is not obvious to the author how these restrictions imply the conjecture. The proof of Theorem~\ref{T: main_1} overcomes this by using four-point Gaussianity to control the second-order fluctuations of the relevant averaged identities, forcing an almost sure multiplicative relation for the moment generating function of the directing measure. The proof of Theorem~\ref{T: main_2} overcomes the same obstruction by leveraging the additional structure given by the L\'{e}vy-Khintchine representation of infinitely divisible measures. In the unrestricted two-point problem, neither of these additional mechanisms is available, and this is the source of the remaining gap.

\section{Proof of Theorem~\ref{T: main_1}}\label{S: proof of 1}

    We fix $X = \lp{X_i}_{i=1}^\infty$ an exchangeable sequence of real-valued random variables. By de Finetti's Theorem \cite{definetti37} there exists a random probability measure $\eta$ such that, conditional on $\eta$, the sequence $X$ is independent and identically distributed. Let $\lambda$ denote the law of $\eta$ as a random measure on $\R$.

    We now introduce some notation. Fix $\rho \in [0,1]$. For $t\in \R$, we define the moment generating function $M_\eta$ and the log moment generating function $K_\eta$ by
    \begin{equation*}
        M_\eta(t) := \int_\R e^{tx} d\eta(x) \quad \text{ and } \quad K_\eta(t):= \log M_\eta(t),
    \end{equation*}
    respectively, whenever $M_\eta(t) < \infty$. We define a tilt $g_t(\eta)$ of the moment generating function by
    \begin{equation}\label{E: g_defn}
        g_t(\eta) := \exp\lp{K_\eta(t)- \frac{1+\rho}{2}t^2}.
    \end{equation}
    The law of a centered Gaussian random variable with variance $\rho$ we denote by $\gamma_\rho$, and we set $\gamma = \gamma_1$.
    We define the defect $q_{a,b}$, for any $a,b \in \R$, by 
    \begin{equation}\label{E: q_defn}
        q_{a,b}:=g_ag_b - e^{2\rho ab}g_{a+b}.
    \end{equation}
    
    Observe that if $\eta = \delta_N * \gamma_{1-\rho}$ for $N \sim \gamma_\rho$, then the defect $q_{a,b}$ will be identically zero for any $a,b \in \R$ almost surely. It is this relationship to the representation in Equation~\eqref{E: representation_result} that our method exploits. 

    The proof proceeds by expressing Gaussianity of finite-dimensional marginals as a rigidity statement for the tilted law of the directing measure. Lemma~\ref{L: exponential_K_moment} first shows that exponential moments of finite tuples of $X$ are encoded by the log moment generating function $K_\eta$ of the directing measure. Lemma~\ref{L: g_moment} translates the Gaussian assumption on $(X_1,X_2, X_3, X_4)$ into explicit mixed moment identities for the tilted quantities $g_t$. The key step is Lemma~\ref{L: multiplicative_deficit_is_zero}; using the four-point Gaussian assumption, we compute the $L^2(\lambda)$ norm of the multiplicative defect $q_{a,b}$ from Equation~\eqref{E: q_defn} and show that this norm is zero. Thus the averaged tilt identities become an almost sure multiplicative identity. Lemma~\ref{L: eta_decomposition} then converts this identity into additivity of $K_\eta$, first on the rationals $\Q$ and then on all of $\R$ by continuity. This then forces the desired representation of Equation~\eqref{E: representation_result}.

    The following lemma demonstrates that the log moment generating function characterizes the full joint distribution of any finite tuple from $X$.

    \begin{lemma}\label{L: exponential_K_moment}
        For $n \geq 1$ suppose that $a_1,\ldots, a_n \in \R$ satisfies $M_\eta(a_j) < \infty$ almost surely for all $j=1,\ldots, n$. Then
        \begin{equation*}
            \EE\lb{\exp\lp{\sum_{i=1}^n K_\eta(a_i)}} = \EE\lb{\exp\lp{\sum_{i=1}^n a_i X_i}}.
        \end{equation*}
    \end{lemma}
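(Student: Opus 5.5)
The plan is to condition on the directing measure $\eta$ and use the de Finetti conditional independence to factor the exponential moment. By de Finetti's theorem, conditional on $\eta$ the variables $X_1,\ldots,X_n$ are independent and identically distributed with common law $\eta$. I will formalize this as follows: for any nonnegative measurable $F:\R^n\to[0,\infty]$,
\begin{equation*}
\EE\lb{F(X_1,\ldots,X_n)\,\middle|\,\eta} = \int_{\R^n} F \, d\eta^{\otimes n} \quad \text{almost surely}.
\end{equation*}
This holds for indicators of product sets by the conditional i.i.d.\ property. It then extends to all Borel sets by a $\pi$--$\lambda$ argument, and finally to nonnegative measurable $F$ by monotone convergence.

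Next I apply this with $F(x_1,\ldots,x_n)=\exp\lp{\sum_i a_i x_i}$. This function is nonnegative and a product of one-variable functions, so Tonelli's theorem gives
\begin{equation*}
\int e^{\sum_i a_i x_i}\, d\eta^{\otimes n}(x) = \prod_{i=1}^n \int_\R e^{a_i x}\, d\eta(x) = \prod_{i=1}^n M_\eta(a_i).
\end{equation*}
This identity holds in $[0,\infty]$ with no integrability assumption.

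The hypothesis $M_\eta(a_j)<\infty$ almost surely is what makes $K_\eta(a_j)=\log M_\eta(a_j)$ well defined and finite almost surely. It also gives $M_\eta(a_j)>0$, since the integrand is strictly positive and $\eta$ is a probability measure. Therefore $\prod_i M_\eta(a_i)=\exp\lp{\sum_i K_\eta(a_i)}$ almost surely.

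Finally, taking expectations and using the tower property for nonnegative variables yields the identity, with both sides possibly equal to $+\infty$. The main point needing care is the first step: the conditional i.i.d.\ statement has to be applied to a function that is unbounded and possibly non-integrable. Working throughout with nonnegative functions and monotone convergence avoids any integrability issue. The identity is therefore to be read in $[0,\infty]$, and its finiteness is not claimed by the lemma.
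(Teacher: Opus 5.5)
Your proposal is correct and is essentially the paper's argument: condition on $\eta$, use the conditional i.i.d.\ structure to factor $\EE\lb{\exp\lp{\sum_i a_i X_i}\mid \eta}$ as $\prod_i M_\eta(a_i)=\exp\lp{\sum_i K_\eta(a_i)}$, and then take expectations. The extra care you add simply makes rigorous steps the paper takes for granted, namely the monotone-class extension to unbounded nonnegative $F$, the positivity of $M_\eta$, and reading the identity in $[0,\infty]$.
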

    \begin{proof}
        Conditional on $\eta$, the variables $X_1,\ldots, X_n$ are independent and identically distributed with common law $\eta$. Therefore
        \begin{equation*}
            \EE\lb{\exp\lp{\sum_{i=1}^n a_i X_i} \,\mid\, \eta} = \prod_{i=1}^n \int_\R e^{a_i x} d\eta(x) = \exp\lp{\sum_{i=1}^n K_\eta(a_i)}.
        \end{equation*}
        Taking expectations then gives the result.
    \end{proof}

    We now verify that $K_\eta$ and $M_\eta$ are almost surely finite, and that $K_\eta$ is continuous.
    \begin{lemma}\label{L: mgf_finite_continuous}
        For any $t \in \R$ we have that $M_\eta(t)$ and $K_\eta(t)$ are almost surely finite. Moreover, $K_\eta$ is almost surely continuous.
    \end{lemma}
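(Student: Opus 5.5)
The plan is to obtain finiteness from the fact that $X_1$ is marginally Gaussian (under the standing hypothesis of Theorem~\ref{T: main_1}; only the one-dimensional marginal is needed), and continuity from dominated convergence applied pathwise. Fix $t\in\R$. By Tonelli and the de Finetti representation,
\begin{equation*}
    \EE\lb{M_\eta(t)} = \EE\lb{\int_\R e^{tx}\, d\eta(x)} = \EE\lb{\EE\lb{e^{tX_1}\mid \eta}} = \EE\lb{e^{tX_1}} = e^{t^2/2} < \infty .
\end{equation*}
A nonnegative random variable with finite expectation is almost surely finite, so $M_\eta(t)<\infty$ almost surely. Since $\eta$ is a probability measure and $e^{tx}>0$, we also have $M_\eta(t)>0$. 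Hence $K_\eta(t)=\log M_\eta(t)$ is almost surely finite.

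The step that needs care is passing from ``for each fixed $t$, almost surely'' to ``almost surely, for all $t$''. I would take the countable set $\{n,-n : n\in\N\}$ and intersect the corresponding full-measure events to obtain a single event $\Omega_0$ of probability one on which $M_\eta(\pm n)<\infty$ for every $n$. On $\Omega_0$, for any $t\in\R$ choose $n\ge |t|$. Then $e^{tx}\le e^{nx}+e^{-nx}$ for all $x$, so $M_\eta(t)\le M_\eta(n)+M_\eta(-n)<\infty$. This gives finiteness of $M_\eta$ and $K_\eta$ simultaneously for all $t$, almost surely.

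For continuity, work on $\Omega_0$ and take $t_k\to t$ with all $|t_k|\le n$. The integrands $e^{t_kx}$ converge pointwise to $e^{tx}$ and are dominated by the $\eta$-integrable function $e^{nx}+e^{-nx}$. Dominated convergence then gives $M_\eta(t_k)\to M_\eta(t)$, so $M_\eta$ is continuous. Because $M_\eta$ is strictly positive, $K_\eta=\log M_\eta$ is continuous on $\R$ on the event $\Omega_0$. I do not expect any real obstacle here; the only subtlety is the uniformity in $t$, and the countable intersection together with the convexity bound $e^{tx}\le e^{nx}+e^{-nx}$ handles it.
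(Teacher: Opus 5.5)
Your proof is correct. It has the same skeleton as the paper's proof but uses different tools for the two key steps.

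Both arguments start identically. You use Tonelli and de Finetti to get $\EE\lb{M_\eta(t)} = e^{t^2/2}$, conclude that $M_\eta(t)<\infty$ a.s. for fixed $t$, and then pass to a countable intersection of full-measure events.

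The paper then intersects over $\Q$. It extends finiteness to an arbitrary $t$ by Hölder interpolation, $M_\eta(t)\le M_\eta(q_-)^\theta M_\eta(q_+)^{1-\theta}$ for rationals $q_-<t<q_+$. The same Hölder inequality shows that $K_\eta$ is convex, and continuity follows from the general fact that a finite convex function on $\R$ is continuous.

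You instead intersect only over $\{\pm n\}$ and use the cruder pointwise bound $e^{tx}\le e^{n|x|}\le e^{nx}+e^{-nx}$ for $|t|\le n$. You then obtain continuity of $M_\eta$ itself by dominated convergence, and continuity of $K_\eta$ from $M_\eta>0$.

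Your route is slightly more elementary: it needs no Hölder inequality and no convex-analysis fact. The same domination, with polynomial factors absorbed into $e^{(n+1)|x|}$, would also justify differentiating $M_\eta$ under the integral, which gives more than continuity. The paper's route uses a single inequality for both finiteness and continuity, and yields the extra structural fact that $K_\eta$ is convex. That fact is not needed later, since Lemma~\ref{L: eta_decomposition} only uses continuity of $K_\eta$ on a full-measure event. Either argument supplies exactly that.
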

    
    \begin{proof}
        Since $X_1$ is a standard Gaussian random variable, for every fixed $t\in \R$ we have
        \begin{equation*}
            \EE\lb{e^{tX_1}}=e^{t^2/2}<\infty.
        \end{equation*}
        By de Finetti's theorem,
        \begin{equation*}
            \EE\lb{M_\eta(t)}
            =
            \EE\lb{\int_\R e^{tx}\,d\eta(x)}
            =
            \EE\lb{\EE\lb{e^{tX_1}\mid \eta}}
            =
            \EE\lb{e^{tX_1}}
            <
            \infty.
        \end{equation*}
        Since $M_\eta(t)$ is nonnegative, this implies that $M_\eta(t)<\infty$ almost surely for each fixed $t\in \R$.
    
        We now upgrade this fixed-$t$ statement to a simultaneous statement for all $t\in \R$. Since $\Q$ is countable, there is an event $E$ of probability one such that
        \begin{equation*}
            M_\eta(q)<\infty
            \qquad
            \text{for every } q\in \Q
        \end{equation*}
        on $E$. Fix $\omega\in E$ and let $t\in \R$. Choose rational numbers $q_-<t<q_+$. Write
        \begin{equation*}
            t=\theta q_-+(1-\theta)q_+
        \end{equation*}
        for some $\theta\in(0,1)$. By H\"older's inequality,
        \begin{align*}
            M_\eta(t)
            &=
            \int_\R e^{tx}\,d\eta(x) \\
            &=
            \int_\R \left(e^{q_-x}\right)^\theta
                    \left(e^{q_+x}\right)^{1-\theta}
                \,d\eta(x) \\
            &\leq
            \left(\int_\R e^{q_-x}\,d\eta(x)\right)^\theta
            \left(\int_\R e^{q_+x}\,d\eta(x)\right)^{1-\theta} \\
            &=
            M_\eta(q_-)^\theta M_\eta(q_+)^{1-\theta}
            <
            \infty.
        \end{align*}
        Therefore, on $E$, $M_\eta(t)<\infty$ for every $t\in \R$.
    
        Since $e^{tx}>0$ and $\eta$ is a probability measure, we also have
        \begin{equation*}
            M_\eta(t)>0
            \qquad
            \text{for every } t\in \R.
        \end{equation*}
        Hence $K_\eta(t)=\log M_\eta(t)$ is finite for every $t\in \R$ on the same event $E$.
    
        Finally, the same H\"older inequality shows that $K_\eta$ is convex. Indeed, for $s,t\in \R$ and $\theta\in[0,1]$,
        \begin{equation*}
            K_\eta\left(\theta s+(1-\theta)t\right)
            \leq
            \theta K_\eta(s)+(1-\theta)K_\eta(t).
        \end{equation*}
        Thus, on $E$, $K_\eta$ is a finite convex function on all of $\R$. Every finite convex function on $\R$ is continuous. Therefore $K_\eta$ is almost surely continuous.
    \end{proof}

    \begin{lemma}\label{L: g_moment}
        Assume that $X_1$ is Gaussian distributed with variance $1$ and that for some $n \in \{1,2,3,4\}$ that the vector $(X_1,\ldots, X_n)$ is a centered multivariate Gaussian with covariance matrix $\Sigma^{(n)}$. Then for every $a_1,\ldots, a_n \in \R$ we have
        \begin{equation*}
            \EE\lb{\prod_{i=1}^n g_{a_i}} = \exp\lp{-\frac{\rho}{2} \sum_{i=1}^n a_i^2 + \rho \sum_{1 \leq i < j \leq n} a_ia_j}.
        \end{equation*}
    \end{lemma}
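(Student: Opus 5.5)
The identity follows by combining Lemma~\ref{L: exponential_K_moment} with the explicit Gaussian moment generating function; the only substantive step is bookkeeping of the quadratic form in the exponent.

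Begin by unwinding the definition in Equation~\eqref{E: g_defn}. We have
\begin{equation*}
    \prod_{i=1}^n g_{a_i}(\eta) = \exp\lp{\sum_{i=1}^n K_\eta(a_i)} \exp\lp{-\frac{1+\rho}{2}\sum_{i=1}^n a_i^2}.
\end{equation*}
The second factor is deterministic and can be pulled out of the expectation. By Lemma~\ref{L: mgf_finite_continuous}, $M_\eta(a_i)<\infty$ almost surely for every $i$, so Lemma~\ref{L: exponential_K_moment} applies and gives
\begin{equation*}
    \EE\lb{\exp\lp{\sum_{i=1}^n K_\eta(a_i)}} = \EE\lb{\exp\lp{\sum_{i=1}^n a_iX_i}}.
\end{equation*}

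Next, evaluate the right-hand side using the Gaussian hypothesis. By assumption, $(X_1,\ldots,X_n)$ is centered Gaussian with covariance $\Sigma^{(n)}$. Hence $\sum_i a_iX_i$ is a centered Gaussian with variance $a^\top\Sigma^{(n)}a = \sum_i a_i^2 + 2\rho\sum_{i<j}a_ia_j$, and its moment generating function gives
\begin{equation*}
    \EE\lb{\exp\lp{\sum_{i=1}^n a_iX_i}} = \exp\lp{\frac12\sum_{i=1}^n a_i^2 + \rho\sum_{1\le i<j\le n}a_ia_j}.
\end{equation*}
For $n=1$ this is simply the marginal assumption that $X_1$ is standard Gaussian, consistent with $\Sigma^{(1)}=1$.

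Multiplying by the deterministic factor $\exp\lp{-\frac{1+\rho}{2}\sum_i a_i^2}$ combines the coefficients of $\sum_i a_i^2$ as $\frac12 - \frac{1+\rho}{2} = -\frac{\rho}{2}$, while the cross term $\rho\sum_{i<j}a_ia_j$ is unchanged. This yields exactly the claimed formula.

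There is no real obstacle. The one point needing care is integrability when applying Lemma~\ref{L: exponential_K_moment}. Since $\exp\lp{\sum_i K_\eta(a_i)}$ is nonnegative, the conditional-expectation identity in that lemma holds via Tonelli regardless of integrability, and the Gaussian computation shows the common value is finite. So the whole argument reduces to the arithmetic of the quadratic form.
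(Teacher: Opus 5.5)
Your proposal is correct and follows the paper's proof essentially step for step: unwind the definition of $g$, apply Lemma~\ref{L: exponential_K_moment} (justified by Lemma~\ref{L: mgf_finite_continuous}), evaluate the Gaussian moment generating function, and combine the coefficients as $\frac12 - \frac{1+\rho}{2} = -\frac{\rho}{2}$. Your remark that Tonelli handles integrability, since the integrand is nonnegative, is a harmless refinement that the paper leaves implicit.
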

    \begin{proof}
        By definition of $g$ in Equation~\eqref{E: g_defn},
        \begin{equation}\label{E: tilt_moment_1}
            \prod_{i=1}^n g_{a_i} = \exp\lp{\sum_{i=1}^n K_\eta(a_i) - \frac{1+\rho}{2} \sum_{i=1}^n a_i^2}.
        \end{equation}
        By Lemma~\ref{L: mgf_finite_continuous} we can apply Lemma~\ref{L: exponential_K_moment}.
        By Lemma~\ref{L: exponential_K_moment}, taking the expectation of Equation~\eqref{E: tilt_moment_1} yields
        \begin{equation}\label{E: tilt_moment_2}
            \EE\lb{\prod_{i=1}^n g_{a_i}} = \exp\lp{-\frac{1+\rho}{2} \sum_{i=1}^n a_i^2} \EE\lb{\exp\lp{\sum_{i=1}^n a_i X_i}}.
        \end{equation}
        As $(X_1,\ldots, X_n)$ is a centered Gaussian with covariance matrix $\Sigma$ we have that
        \begin{equation}\label{E: tilt_moment_3}
            \EE\lb{\exp\lp{\sum_{i=1}^n a_i X_i}} = \exp\lp{\frac{1}{2}\sum_{i,j = 1}^n a_ia_j \Sigma_{ij}} = \exp\lp{\frac{1}{2}\sum_{i=1}^n a_i^2 + \rho \sum_{1 \leq i < j \leq n} a_ia_j}.
        \end{equation}
        Substituting Equation~\eqref{E: tilt_moment_3} into Equation~\eqref{E: tilt_moment_2} then yields the claim.
    \end{proof}

    \begin{lemma}\label{L: multiplicative_deficit_is_zero}
        Assume that $(X_1,X_2, X_3,X_4)$ is a centered multivariate Gaussian with covariance matrix $\Sigma^{(4)}$.
        Then for every $a,b \in \R$, 
        \begin{equation*}
            q_{a,b} = 0 \quad \text{ almost surely.}
        \end{equation*}
    \end{lemma}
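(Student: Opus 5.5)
Fix $a,b\in\R$. We show $\EE[q_{a,b}^2]=0$. Expanding the square gives
\begin{equation*}
    q_{a,b}^2 = g_a^2 g_b^2 - 2e^{2\rho ab} g_a g_b g_{a+b} + e^{4\rho ab} g_{a+b}^2 .
\end{equation*}
Each term is a product of at most four factors $g_{a_i}$, with repetitions allowed. For example, $g_a^2g_b^2 = g_ag_ag_bg_b$ corresponds to the tuple $(a_1,a_2,a_3,a_4)=(a,a,b,b)$. Lemma~\ref{L: g_moment} applies with $n\le 4$ and arbitrary real $a_i$, repeated or not, because it only uses the joint Gaussianity of $(X_1,\dots,X_n)$ through Lemma~\ref{L: exponential_K_moment}. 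So we can compute each expectation in closed form. Finiteness of all quantities is supplied by Lemma~\ref{L: mgf_finite_continuous}, and the expectations are finite since the right-hand sides of Lemma~\ref{L: g_moment} are. Write $E(a_1,\dots,a_n)=\exp\bigl(-\tfrac{\rho}{2}\sum a_i^2+\rho\sum_{i<j}a_ia_j\bigr)$ for the right-hand side there.

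We evaluate the three terms.
\begin{itemize}
\item[] \emph{First term.} With $(a,a,b,b)$ we have $\sum a_i^2 = 2a^2+2b^2$ and $\sum_{i<j}a_ia_j = a^2+b^2+4ab$. Hence $\EE[g_a^2g_b^2]=\exp\bigl(-\rho(a^2+b^2)+\rho(a^2+b^2)+4\rho ab\bigr)=e^{4\rho ab}$.
\item[] \emph{Cross term.} With $(a,b,a+b)$ we have $\sum a_i^2 = 2a^2+2b^2+2ab$ and $\sum_{i<j}a_ia_j = ab+(a+b)^2 = a^2+b^2+3ab$. This gives $\EE[g_ag_bg_{a+b}]=\exp(-\rho(a^2+b^2+ab)+\rho(a^2+b^2+3ab))=e^{2\rho ab}$, so the cross term contributes $-2e^{4\rho ab}$.
\item[] \emph{Last term.} With $(a+b,a+b)$ we get $\EE[g_{a+b}^2]=\exp(-\rho(a+b)^2+\rho(a+b)^2)=1$, so this term contributes $e^{4\rho ab}$.
\end{itemize}
Summing the three contributions gives $\EE[q_{a,b}^2]=e^{4\rho ab}-2e^{4\rho ab}+e^{4\rho ab}=0$. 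Hence $q_{a,b}=0$ almost surely.

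The only delicate point is justifying the use of Lemma~\ref{L: g_moment} with repeated arguments, such as $g_a^2$. The issue is that a term like $g_a^2$ could be read as needing $X_1$ alone raised to a power, which is \emph{not} what the lemma provides. The correct reading is that $g_a^2=\exp(K_\eta(a)+K_\eta(a)-(1+\rho)a^2)$. By Lemma~\ref{L: exponential_K_moment}, the expectation of $\exp(K_\eta(a)+K_\eta(a))$ equals $\EE[e^{aX_1+aX_2}]$, using two distinct coordinates. This is exactly where four-point Gaussianity is needed: the first term requires the law of $aX_1+aX_2+bX_3+bX_4$. All four expectations are finite, so expanding the square and splitting the expectation is legitimate.
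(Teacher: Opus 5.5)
Your proposal is correct and follows the paper's proof essentially verbatim. You expand $\EE[q_{a,b}^2]$ into the same three terms and evaluate them via Lemma~\ref{L: g_moment} with the tuples $(a,a,b,b)$, $(a,b,a+b)$, and $(a+b,a+b)$, getting $e^{4\rho ab}-2e^{4\rho ab}+e^{4\rho ab}=0$. Your remarks on the integrability of the three nonnegative terms, and on realizing repeated arguments through distinct coordinates (which is where four-point Gaussianity enters), are accurate and make explicit what the paper leaves implicit.
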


    \begin{proof}
        We proceed by computing the $L^2(\lambda)$ norm of $q_{a,b}$. By linearity of the inner product we have
        \begin{equation}\label{E: q_norm_1}
            \norm{q_{a,b}}_{L^2(\lambda)}^2 = \EE\lb{g_a^2 g_b^2} - 2e^{2\rho ab} \EE\lb{g_ag_bg_{a+b}} + e^{4\rho ab} \EE\lb{g_{a+b}^2}.
        \end{equation}
        We now apply Lemma~\ref{L: g_moment} three times: For $n=4$ and $(a_1, a_2, a_3,a_4) = (a,a,b,b)$ we obtain
        \begin{equation}\label{E: q_norm_2}
            \EE\lb{g_a^2 g_b^2} = e^{4\rho ab}.
        \end{equation}
        For $n = 3$ and $(a_1,a_2,a_3) = (a,b,a+b)$ we obtain
        \begin{equation}\label{E: q_norm_3}
            \EE\lb{g_a g_b g_{a+b}} = e^{2\rho ab}.
        \end{equation}
        With $n = 2$ and $(a_1,a_2) = (a+b,a+b)$ we obtain
        \begin{equation}\label{E: q_norm_4}
            \EE\lb{g_{a+b}^2} = 1.
        \end{equation}
        Combining Equations~\eqref{E: q_norm_1}, \eqref{E: q_norm_2}, and \eqref{E: q_norm_3} with Equation~\eqref{E: q_norm_4} then yields
        \begin{equation*}
            \norm{q_{a,b}}_{L^2(\lambda)}^2 = e^{4\rho ab} - 2 e^{4 \rho ab} + e^{4 \rho ab} = 0.
        \end{equation*}
        Therefore $q_{a,b} = 0$ almost surely.
    \end{proof}

    \begin{lemma}\label{L: eta_decomposition}
        Assume for any fixed $a,b \in \R$ that $q_{a,b} = 0$ almost surely.
        Then there exists a real-valued random variable $N = N(\eta)$ such that with probability one
        \begin{equation}\label{E: K_quadratic}
            K_\eta(t) = Nt + \frac{1-\rho}{2}t^2 \quad \text{ for all } t \in \R.
        \end{equation}
        Moreover,
        \begin{equation*}
            \eta = \delta_N * \gamma_{1-\rho}
        \end{equation*}
        almost surely.
    \end{lemma}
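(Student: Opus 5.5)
Since $q_{a,b}=0$ almost surely for each fixed pair, and $\Q^2$ is countable, there is an event $E'$ of probability one on which $q_{a,b}=0$ for all rational $a,b$. On $E'$ we also have the conclusions of Lemma~\ref{L: mgf_finite_continuous}. Taking logarithms in $g_ag_b=e^{2\rho ab}g_{a+b}$ (all terms are strictly positive) and unwinding the definition of $g$ in Equation~\eqref{E: g_defn} gives
\begin{equation*}
K_\eta(a)+K_\eta(b)-\frac{1+\rho}{2}(a^2+b^2) = 2\rho ab + K_\eta(a+b) - \frac{1+\rho}{2}(a+b)^2 .
\end{equation*}
Since $\frac{1+\rho}{2}\bigl((a+b)^2-a^2-b^2\bigr)-2\rho ab=(1-\rho)ab$, this rearranges to $K_\eta(a+b)=K_\eta(a)+K_\eta(b)-(1-\rho)ab$.

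Set $L(t):=K_\eta(t)-\frac{1-\rho}{2}t^2$. Then $L(a+b)=L(a)+L(b)$ for all rational $a,b$, so $L$ is additive on $\Q$. The usual Cauchy argument gives $L(q)=qL(1)$ for $q\in\Q$: induct on integers, then use $nL(m/n)=L(m)$. Put $N:=L(1)=K_\eta(1)-\frac{1-\rho}{2}$. This is a measurable function of $\eta$, and it is finite on $E'$. By Lemma~\ref{L: mgf_finite_continuous}, $K_\eta$ is continuous on $E'$, so $L$ is continuous. The identity $L(t)=Nt$ therefore extends from $\Q$ to $\R$, which proves Equation~\eqref{E: K_quadratic}.

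It remains to identify $\eta$. On $E'$ the moment generating function of $\eta$ is finite on all of $\R$ and equals $\exp\bigl(Nt+\frac{1-\rho}{2}t^2\bigr)$, the moment generating function of $\delta_N*\gamma_{1-\rho}$. A moment generating function finite on an open interval around $0$ determines the law. One justification: $M_\eta$ then extends analytically to a strip, so the characteristic functions agree. Hence $\eta=\delta_N*\gamma_{1-\rho}$ on $E'$, with the degenerate case $\rho=1$ meaning $\eta=\delta_N$.

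The only delicate point is the passage from fixed-$(a,b)$ almost sure statements to a single full-measure event valid for all $t$. I would handle it exactly as above: restrict to rationals, then use the almost sure continuity from Lemma~\ref{L: mgf_finite_continuous}. Everything else is algebra and the uniqueness theorem for moment generating functions.
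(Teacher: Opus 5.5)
Your argument is essentially the paper's proof: restrict to rational $a,b$ by countability, take logarithms, get additivity of $L(t)=K_\eta(t)-\frac{1-\rho}{2}t^2$ on $\Q$, extend by the continuity from Lemma~\ref{L: mgf_finite_continuous}, and identify $\eta$ through uniqueness of moment generating functions (your remarks on measurability of $N$ and MGF uniqueness fill in details the paper leaves implicit). One sign slip needs fixing: your own computation $\frac{1+\rho}{2}\bigl((a+b)^2-a^2-b^2\bigr)-2\rho ab=(1-\rho)ab$ gives $K_\eta(a+b)=K_\eta(a)+K_\eta(b)+(1-\rho)ab$, and it is this plus sign, not the minus sign you wrote, that makes $L$ additive.
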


    \begin{proof}
        We first claim that there is an event $E$ of probability one such that for every $a,b \in \Q$ that
        \begin{equation*}
            K_{\eta}(a+b) = K_\eta(a) + K_\eta(b) + (1-\rho) ab.
        \end{equation*}
        Indeed, if $q_{a,b} = 0$, we have that
        \begin{equation*}
            g_a g_b = e^{2\rho ab} g_{a+b} \quad \text{ a.s.}
        \end{equation*}
        Substituting the definition of $g$ from Equation~\eqref{E: g_defn} we obtain
        \begin{equation*}
            \exp\lp{K_\eta(a) - \frac{1+\rho}{2}a^2} \exp\lp{K_\eta(b) - \frac{1+\rho}{2} b^2} = e^{2\rho ab} \exp\lp{K_\eta(a+b) - \frac{1+\rho}{2}(a+b)^2}.
        \end{equation*}
        Taking logarithms and rearranging then gives
        \begin{equation*}
            K_\eta(a+b) = K_{\eta}(a) + K_\eta(b) + (1-\rho)ab.
        \end{equation*}
        As this holds almost surely for any fixed $a,b$ we obtain that with probability one that it holds for all $a,b \in \Q$ by countability of $\Q$. 

        We proceed now to prove Equation~\eqref{E: K_quadratic}. Define the linear part $L_\eta$ of $K_\eta$ by
        \begin{equation*}
            L_\eta(t) := K_\eta(t) - \frac{1-\rho}{2}t^2.
        \end{equation*}
        Then for every $a,b \in \Q$ we have
        \begin{equation*}
            L_\eta(a+b) = L_\eta(a) + L_\eta(b).
        \end{equation*}
        Therefore $L_\eta$ is additive on $\Q$. Note that $K_\eta$, as the log moment generating function of a probability measure, is continuous, implying continuity of $L_\eta$. By continuity it follows therefore that
        \begin{equation*}
            L_\eta(t) = tL_\eta(1) \quad \text{ for all } t \in \R.
        \end{equation*}
        Defining $N \equiv N(\eta) := L_\eta(1)$ we obtain Equation~\eqref{E: K_quadratic}. By definition of $K_\eta$ we further obtain
        \begin{equation*}
            \int_\R e^{tx} d\eta(x) = \exp\lp{Nt + \frac{1-\rho}{2}t^2},
        \end{equation*}
        which, conditional on $N$, is the moment generating function of a Gaussian random variable with mean $N$ and variance $1-\rho$. This completes the claim.
    \end{proof}

    We are now able to prove Theorem~\ref{T: main_1}
    \begin{proof}
        We proceed by showing that $\eta$ satisfies the representation from Equation~\eqref{E: representation_result}. By Lemma~\ref{L: multiplicative_deficit_is_zero} we obtain that $q_{a,b} = 0$ almost surely for every fixed $a,b \in \R$. In particular, this holds almost surely jointly for all $a,b \in \Q$. By Lemma~\ref{L: eta_decomposition} there exists a random variable $N = N(\eta)$ such that
        \begin{equation}\label{E: conv}
            \eta = \delta_N * \gamma_{1-\rho}.
        \end{equation}
        It suffices, therefore, to show that $N$ is a centered Gaussian random variable with variance $\rho$. Indeed, as $X_1$ is normally distributed with variance $1$ Equation~\eqref{E: conv} implies that
        \begin{equation*}
            \EE\lb{e^{tX_1}} = e^{\frac{1}{2}t^2} = \EE\lb{e^{Nt + \frac{1-\rho}{2}t^2}} = \EE\lb{e^{Nt}} e^{\frac{1}{2}(1-\rho)t^2}.
        \end{equation*}
        Therefore
        \begin{equation*}
            \EE\lb{e^{Nt}} = e^{\frac{1}{2} \rho t^2},
        \end{equation*}
        which is the moment generating function of a centered Gaussian random variable with variance $\rho$. Therefore $X$ is a Gaussian process.
    \end{proof}

\section{Proof of Theorem~\ref{T: main_2}}\label{S: proof of 2}

    In this section, we prove Theorem~\ref{T: main_2}. The proof exploits the extra rigidity imposed by the L\'{e}vy-Khintchine representation theorem. Without infinite divisibility, the two-point Gaussian assumption only constrains certain averaged conditional moments of $\eta$, leaving room for cancellations between fluctuations of the conditional variance and higher order conditional cumulants. Lemma~\ref{L: cumulant_calculation} rules out this cancellation mechanism; the fourth cumulant of $\eta$ is non-negative and vanishes only when $\eta$ is Gaussian. We apply this to $X_1 - X_2$. Conditional on $\eta$, Lemma~\ref{L: fourth_moment_to_cumulants} expresses the fourth moment of $X_1 - X_2$ in terms of the conditional variance of $\eta$ and the fourth cumulant of $\eta$. Unconditionally, however, $X_1 - X_2$ is Gaussian by the two-point assumption. Comparing these two fourth-moment calculations then forces the conditional variance and cumulants to be constant, which yields the result after a short calculation.

    \begin{lemma}\label{L: cumulant_calculation}
        Let $\lambda$ be an infinitely divisible probability measure on $\R$ with finite fourth moment. Let $\kappa_4(\lambda)$ denote its fourth cumulant. Then $\kappa_4(\lambda) \geq 0$. Moreover, $\kappa_4(\lambda) = 0$ if and only if $\lambda$ is a (possibly degenerate) Gaussian law.
    \end{lemma}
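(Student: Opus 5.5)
The plan is to use the L\'evy--Khintchine representation and read off the fourth cumulant as the fourth moment of the L\'evy measure. Write the characteristic function of the infinitely divisible law $\lambda$ as
\begin{equation*}
    \log \hat\lambda(u) = ibu - \frac{\sigma^2}{2}u^2 + \int_{\R\setminus\{0\}} \lp{e^{iux} - 1 - iux\mathbbm{1}_{\{|x|\leq 1\}}} \nu(dx),
\end{equation*}
with $\sigma^2 \geq 0$ and $\nu$ a L\'evy measure, so that $\int \min(1,x^2)\,\nu(dx) < \infty$.

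\textbf{Step 1: moments of $\lambda$ versus moments of $\nu$.} I will invoke the standard fact (e.g.\ Sato, Theorem 25.3 with the submultiplicative function $1+|x|^4$) that $\lambda$ has a finite fourth moment if and only if $\int_{|x|>1} x^4\,\nu(dx) < \infty$. Combined with $\int_{|x|\leq 1} x^2\,\nu(dx)<\infty$, this gives $\int x^k\,\nu(dx)$ absolutely convergent for $k=2,3,4$. This moment equivalence is the only non-routine input, and it is the step I expect to be the main obstacle. If a self-contained argument is wanted, I will split $\nu$ into its parts on $\{|x|\le 1\}$ and $\{|x|>1\}$. The second part gives a compound Poisson factor, which is independent of the first. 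The first factor has all exponential moments, by differentiating the exponent. The fourth moment of the sum is finite if and only if that of the compound Poisson factor is. For the compound Poisson factor, the law is $e^{-c}\sum_n \frac{c^n}{n!}\mu^{*n}$ with $\mu = \nu|_{\{|x|>1\}}/c$, and already the $n=1$ term forces $\int x^4\,d\mu<\infty$.

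\textbf{Step 2: differentiate the exponent.} With the integrability from Step 1, dominated convergence lets me differentiate $\psi(u) = \log\hat\lambda(u)$ four times under the integral sign near $u=0$. The integrand's $k$-th derivative is bounded by $|x|^k$ for $k\geq 2$, and this is integrable by Step 1. Since $\kappa_k = i^{-k}\psi^{(k)}(0)$ whenever $\lambda$ has finite $k$-th moment, the drift and Gaussian parts contribute nothing beyond order two, and I obtain
\begin{equation*}
    \kappa_2(\lambda) = \sigma^2 + \int x^2\,\nu(dx), \qquad \kappa_4(\lambda) = \int_{\R\setminus\{0\}} x^4\,\nu(dx).
\end{equation*}

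\textbf{Step 3: conclude.} The formula immediately gives $\kappa_4(\lambda)\geq 0$. Moreover, $\kappa_4(\lambda)=0$ holds if and only if $x^4=0$ $\nu$-a.e.\ on $\R\setminus\{0\}$, that is, if and only if $\nu=0$. In that case $\log\hat\lambda(u) = ibu - \sigma^2u^2/2$, so by uniqueness of characteristic functions $\lambda$ is Gaussian with mean $b$ and variance $\sigma^2$, degenerate when $\sigma^2=0$. Conversely, a Gaussian law has $\nu=0$, and indeed all of its cumulants of order at least $3$ vanish, so $\kappa_4=0$.
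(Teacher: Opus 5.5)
Your proposal is correct and follows the paper's proof essentially step for step. Both use the L\'evy--Khintchine representation, obtain $\kappa_4(\lambda)=\int_\R x^4\,d\nu(x)$ by differentiating the exponent four times, and note that this vanishes exactly when $\nu=0$; you also justify two points the paper leaves implicit, namely that a finite fourth moment of $\lambda$ forces $\int_{|x|>1} x^4\,d\nu(x)<\infty$ (via Sato's Theorem 25.3 or your compound Poisson splitting), and the converse direction that Gaussian laws have vanishing fourth cumulant.
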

    \begin{proof}
        Since $\lambda$ is infinitely divisible, its characteristic function $\hat{\lambda}$ has L\'{e}vy-Khintchine representation
        \begin{equation*}
            \hat{\lambda}(t) = \exp\lp{iat - \frac{b}{2}t^2 + \int_\R \lp{e^{itx} - 1 - itx\mathbf{1}_{\{|x| \leq 1\}}} d\nu(x)},
        \end{equation*}
        where $a \in \R$, $b \geq 0$, and $\nu$ is a measure on $\R$ satisfying
        \begin{equation*}
            \nu(\{0\}) = 0 \quad \text{ and } \quad \int_\R \lp{1 \wedge x^2} d\nu(x) < \infty.
        \end{equation*}
        Since $\lambda$ has a finite fourth moment we have
        \begin{equation*}
            \int_\R x^4 d\nu(x) < \infty.
        \end{equation*}
        Moreover, differentiating the logarithm of the characteristic function four times at the origin yields
        \begin{equation*}
            \kappa_4(\lambda) = \int_\R x^4 d\nu(x).
        \end{equation*}
        Hence $\kappa_4(\lambda) \geq 0$.

        If $\kappa_4(\lambda) = 0$, then $\nu$ is the zero measure. In particular, $\hat{\lambda}$ simplifies to
        \begin{equation*}
            \hat{\lambda}(t) = \exp\lp{iat - \frac{b}{2}t^2},
        \end{equation*}
        which is the characteristic function of a Gaussian random variable with mean $a$ and variance $b$.
    \end{proof}

    \begin{lemma}\label{L: fourth_moment_to_cumulants}
        Let $\lambda$ be a probability measure on $\R$ with finite fourth moment. Let $Y_1$ and $Y_2$ be independent with common law $\lambda$. Let $v(\lambda)$ and $\kappa_4(\lambda)$ be the variance and fourth cumulant of $\lambda$, respectively. Then
        \begin{equation*}
            \EE\lb{(Y_1-Y_2)^4} = 12v(\lambda)^2 + 2 \kappa_4(\lambda).
        \end{equation*}
    \end{lemma}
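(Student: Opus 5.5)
We will reduce to central moments and expand directly. Let $m$ denote the mean of $\lambda$ and set $Z_i := Y_i - m$ for $i=1,2$. Then $Y_1 - Y_2 = Z_1 - Z_2$, and $Z_1, Z_2$ are independent and centered with common variance $v = v(\lambda)$ and common fourth central moment $\mu_4 := \EE\lb{Z_1^4}$. Finiteness of the fourth moment of $\lambda$ guarantees that all moments appearing below are finite, so every expansion is legitimate.

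Next we expand by the binomial theorem:
\begin{equation*}
    (Z_1 - Z_2)^4 = Z_1^4 - 4Z_1^3Z_2 + 6Z_1^2Z_2^2 - 4Z_1Z_2^3 + Z_2^4 .
\end{equation*}
Taking expectations and using independence, the two cross terms with odd powers factor as $\EE\lb{Z_1^3}\EE\lb{Z_2}$ and $\EE\lb{Z_1}\EE\lb{Z_2^3}$. Both vanish because $Z_i$ is centered. The middle term equals $6\EE\lb{Z_1^2}\EE\lb{Z_2^2} = 6v^2$. This gives
\begin{equation*}
    \EE\lb{(Y_1-Y_2)^4} = 2\mu_4 + 6v^2 .
\end{equation*}

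Finally we convert to cumulants. The fourth cumulant is shift invariant, and for a centered law it equals $\kappa_4(\lambda) = \mu_4 - 3v^2$. One sees this from the expansion of the log characteristic function, or from the standard moment–cumulant relation, since the terms involving $\kappa_1$ and $\kappa_3$ drop out when $\kappa_1 = 0$. Substituting $\mu_4 = \kappa_4(\lambda) + 3v^2$ gives
\begin{equation*}
    2\kappa_4(\lambda) + 6v^2 + 6v^2 = 12v(\lambda)^2 + 2\kappa_4(\lambda),
\end{equation*}
which is the claim. There is no real obstacle here. The only point needing care is justifying $\kappa_4 = \mu_4 - 3v^2$. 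We will do this by differentiating $\log \EE\lb{e^{isZ_1}}$ four times at $s=0$, which is allowed because the fourth moment is finite, and keeping only the terms that survive when $\EE\lb{Z_1} = 0$.
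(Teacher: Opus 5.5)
Your proof is correct, but it takes a slightly different route from the paper's.

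The paper never expands $(Y_1-Y_2)^4$. Instead it uses two facts about cumulants: they add over independent summands, and the fourth cumulant is unchanged under $x\mapsto -x$. Together these give $\kappa_2(Y_1-Y_2)=2v(\lambda)$ and $\kappa_4(Y_1-Y_2)=2\kappa_4(\lambda)$. The paper then applies the centered moment--cumulant relation $\EE[W^4]=\kappa_4(W)+3\kappa_2(W)^2$ once, to $W=Y_1-Y_2$.

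You apply that same relation to the single centered variable $Z_1$, in the form $\kappa_4=\mu_4-3v^2$. Before that, you compute $\EE[(Z_1-Z_2)^4]=2\mu_4+6v^2$ by hand, using the binomial expansion and the factorization $\EE[Z_1^jZ_2^k]=\EE[Z_1^j]\,\EE[Z_2^k]$.

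Your version is more elementary and self-contained. Its only non-trivial input is the one-variable identity, which you justify correctly by differentiating the log characteristic function four times (legitimate since the fourth moment is finite). The paper's version is shorter because cumulant additivity does the bookkeeping for it; additivity is just the statement that the logarithm of a product of characteristic functions is a sum. The paper's route also makes it transparent why the answer is linear in $\kappa_4(\lambda)$ and quadratic in $v(\lambda)$.
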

    \begin{proof}
        Cumulants add under convolution, and the fourth cumulant is unchanged by multiplication by $-1$. Therefore the second and fourth cumulants of $Y_1-Y_2$, denoted by $\kappa_2(Y_1-Y_2)$ and $\kappa_4(Y_1-Y_2)$, are $2v(\lambda)$ and $2\kappa_4(\lambda)$, respectively. Since $Y_1-Y_2$ is centered, its fourth moment is related to its cumulants by
        \begin{equation*}
            \EE\lb{(Y_1-Y_2)^4} = \kappa_4(Y_1-Y_2) + 3 \kappa_2(Y_1-Y_2)^2.
        \end{equation*}
        The result then follows by substitution.
    \end{proof}

    We are now prepared to prove Theorem~\ref{T: main_2}.
    \begin{proof}
        We proceed by demonstrating that we may write the directing measure $\eta$ as
        \begin{equation*}
            \delta_M*\gamma_{1-\rho},
        \end{equation*}
        for $M$ a Gaussian random variable with variance $\rho$.

        We first observe that, because $X_1$ is a standard normal, we have that
        \begin{equation*}
            \EE\lb{X_1^4} = 3.
        \end{equation*}
        By de Finetti's Theorem, we have
        \begin{equation*}
            \EE\lb{\int_\R x^4 d\eta(x)} = \EE\lb{X_1^4} = 3 < \infty,
        \end{equation*}
        so $\eta$ almost surely has finite fourth moment.

        Define $M$, $V$, and $K$ to be the conditional mean, variance, and fourth cumulant, i.e.
        \begin{equation*}
            M := \int_\R x d\eta(x), \quad V := \int_\R (x-M)^2 d\eta(x), \quad \text{ and } \quad K := \kappa_4(\eta).
        \end{equation*}
        By Lemma~\ref{L: cumulant_calculation} we have that $K \geq 0$ almost surely. As $X_1$ and $X_2$ are conditionally independent with common law $\eta$, Lemma~\ref{L: fourth_moment_to_cumulants} yields
        \begin{equation*}
            \EE\lb{(X_1-X_2)^4 \mid \eta} = 12 V^2 + 2K.
        \end{equation*}
        Taking expectations yields
        \begin{equation}\label{E: 4th_moment}
            \EE\lb{(X_1-X_2)^4} = 12\EE\lb{V^2} + 2 \EE\lb{K}.
        \end{equation}
        As $(X_1,X_2)$ is a centered multivariate Gaussian with covariance structure $\Sigma^{(2)}$, we have that $X_1 - X_2$ is a centered Gaussian with variance $2(1-\rho)$. In particular, Equation~\eqref{E: 4th_moment} then yields
        \begin{equation*}
            12(1-\rho)^2 = 12 \EE\lb{V^2} + 2 \EE\lb{K} \geq 12(1-\rho)^2 + 2 \EE\lb{K}
        \end{equation*}
        by Jensen's inequality.
        We therefore have that $K \equiv 0$ almost surely and $V$ almost surely equal to $1-\rho$.

        As $K$ is almost surely equal to zero and $\eta$ is almost surely infinitely divisible, we have that $\eta$ is almost surely a Gaussian law. In particular, $\eta$ is a Gaussian law with mean $M$ and variance $1-\rho$ conditional on $M$.

        We now check that $M$ is indeed Gaussian distributed. To this end, we observe, using the L\'{e}vy-Khintchine representation, that
        \begin{equation*}
            \EE\lb{e^{itX_1} \mid \eta} = \exp\lp{itM - \frac{1}{2}(1-\rho)t^2}.
        \end{equation*}
        Taking expectations and using the fact that $X_1$ is marginally a standard normal random variable we have that
        \begin{equation*}
            \EE\lb{e^{itM}} = e^{-\frac{\rho}{2}t^2},
        \end{equation*}
        proving the claim. Therefore $X$ is a Gaussian process.
    \end{proof}

\bibliographystyle{alpha}
\bibliography{main} 

\end{document}